\newtheorem{theorem}{\sc Theorem}[section]
\newtheorem{lemma}[theorem]{\sc Lemma}
\newtheorem{proposition}[theorem]{\sc Proposition}
\newtheorem{corollary}[theorem]{\sc Corollary}
\begin{document}

\author{Jo\~ao Azevedo}
\address{Department of Mathematics, University of Brasilia\\
Brasilia-DF \\ 70910-900 Brazil}
\email{J.P.P.Azevedo@mat.unb.br}

\author{Pavel Shumyatsky}
\address{Department of Mathematics, University of Brasilia\\
Brasilia-DF \\ 70910-900 Brazil}
\email{pavel@unb.br}
\thanks{Supported by CNPq and FAPDF}
\keywords{Commuting probability, Haar measure, compact groups}
\subjclass[2020]{20F24, 20P05, 22C05}

\title[Compact groups]{Compact groups with high commuting probability of monothetic subgroups}

\begin{abstract} If $H$ is a subgroup of a compact group $G$, the probability that a random element of $H$ commutes with a random element of $G$ is denoted by $Pr(H,G)$. Let $\langle g\rangle$ stand for the monothetic subgroup generated by an element $g\in G$ and let $K$ be a subgroup of $G$. We prove that $Pr(\langle x\rangle,G)>0$ for any $x\in K$ if and only if $G$ has an open normal subgroup $T$ such that $K/C_K(T)$ is torsion. In particular, $Pr(\langle x\rangle,G)>0$ for any $x\in G$ if and only if $G$ is virtually central-by-torsion, that is, there is an open normal subgroup $T$ such that $G/Z(T)$ is torsion. We also deduce a number of corollaries of this result.
\end{abstract}

\maketitle

\section{Introduction}

Let $K$ be a subgroup of a finite group $G$. The relative commutativity degree $Pr(K,G)$ of $K$ in $G$ was defined in \cite{erl} as the probability that an element of $G$ commutes with an element of $K$. The main result of \cite{ds} says that if $Pr(K,G)\geq\epsilon>0$, then there is a normal subgroup $T\leq G$ and a subgroup $B\leq K$ such that the indices $[G:T]$ and $[K:B]$ and the order of the commutator subgroup $[T,B]$ are $\epsilon$-bounded. In the case where $K=G$ this is a well known theorem of P. M. Neumann \cite{pmneumann}. Throughout the article we use the expression ``$(a, b, \dots )$-bounded" to mean that a quantity is bounded from above by a number depending only on the parameters $a,b,\dots$. If $B$ and $T$ are subgroups of a group $G$, we denote by $[T,B]$ the subgroup generated by all commutators $[t,b]$ with $t\in T$ and $b\in B$.

There is a natural manner to interpret the above probabilistic results to compact groups by considering the normalized Haar measure (cf. \cite{gustafson}). In particular, the aforementioned result of \cite{ds} was extended to compact groups in \cite{azeshu}. Throughout this paper, compact groups are Hausdorff topological spaces. By a subgroup of a topological group we mean a closed subgroup unless explicitly stated otherwise. If $S$ is a subset of a topological group $G$, then we denote by $\langle S\rangle$ the subgroup (topologically) generated by $S$. A subgroup of $G$ is monothetic if it is generated by a single element. 

The present paper deals with compact groups $G$ having a subgroup $K$ such that $Pr(\langle x\rangle,G)>0$ for any $x\in K$. Note that this condition is satisfied whenever the subgroup $K$ is torsion. More generally, the condition is satisfied whenever the image of $K$ in $G/Z(G)$ is torsion. The next theorem shows that an ``almost converse"  to the latter statement also holds.

\begin{theorem}\label{main1} Let $K$ be a subgroup of a compact group $G$. Then $Pr(\langle x\rangle,G)>0$ for any $x\in K$ if and only if $G$ has an open normal subgroup $T$ such that $K/C_K(T)$ is torsion.
\end{theorem}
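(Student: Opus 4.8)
The ``if'' direction is a routine estimate, which I would dispatch first. Suppose $G$ has an open normal subgroup $T$ with $K/C_K(T)$ torsion and let $x\in K$. Some power $x^{n}$ lies in $C_K(T)$, so $T$ centralises $x^{n}$ and hence every element of the monothetic subgroup $\langle x^{n}\rangle$; moreover $\langle x\rangle/\langle x^{n}\rangle$ is a monothetic group generated by a torsion element, hence finite, say of order $d$. Then $\mu_G(C_G(a))\ge\mu_G(T)=1/[G:T]$ for every $a$ in the subset $\langle x^{n}\rangle$ of $\langle x\rangle$, a set of Haar measure at least $1/d$; integrating gives $Pr(\langle x\rangle,G)\ge 1/(d[G:T])>0$.

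For the converse I would begin by recasting both sides. Since a closed subgroup of a compact group has positive Haar measure precisely when it is open, and $\mu_G(C_G(a))$ is Haar-integrable over $\langle x\rangle$ (the measure-theoretic framework of \cite{gustafson} and \cite{azeshu}), the condition $Pr(\langle x\rangle,G)>0$ says that $\{a\in\langle x\rangle:C_G(a)\ \text{open}\}$ has positive measure in $\langle x\rangle$. This set is a subgroup, hence an open finite-index subgroup of the monothetic group $\langle x\rangle$, so it contains $\langle x^{n}\rangle$ for some $n=n(x)\ge 1$; thus the hypothesis is equivalent to: $C_G(x^{n(x)})$ is open for every $x\in K$. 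Setting $E_x=\bigcup_{m\ge 1}C_G(x^{m!})$, each $E_x$ is then an open subgroup of $G$, and --- using that an increasing union of closed subgroups exhausting a compact group is eventually stationary --- one sees that for an open normal $T$ we have $T\le E_x$ if and only if some power of $x$ centralises $T$. Consequently the theorem reduces to the single assertion: \emph{if each $E_x$ $(x\in K)$ is open, then $\bigcap_{x\in K}E_x$ is open}. This is not formal: an intersection of open subgroups indexed by an infinite set need not be open, so the group structure of $K$ and the specific nature of the $E_x$ must be used.

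To establish this I would first strip off the identity component $G_{0}$: every open subgroup contains $G_{0}$, and an element with open centraliser centralises $G_{0}$, so $G_{0}\le E_x$ for every $x\in K$ automatically; via the crossed-homomorphism computation $\varphi^{k}(t)=t\,\delta(t)^{k}$ with $\delta(t)=t^{-1}\varphi(t)$, valid for the conjugation $\varphi$ by an element centralising both $G_{0}$ and $T/G_{0}$, this reduces matters to the profinite group $G/G_{0}$. For the profinite case the plan is to manufacture uniformity by a Baire-category argument on the compact group $K$: writing $K=\bigcup_{r,U}\{x\in K:x^{r}\in C_G(U)\}$ over positive integers $r$ and open normal subgroups $U$ (after passing, if necessary, to a metrisable quotient so that countably many $U$ suffice), some set $\{x\in K:x^{r}\in C_G(U)\}$ has nonempty interior in $K$; a covering argument then yields one open normal $U^{*}$, one finite-index subgroup $H\le K$ and one integer $R$ for which $\overline H=H/C_{H}(U^{*})$ is generated by its elements of order dividing $R$, with every element a product of boundedly many of these. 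One then shows $\overline H$ is torsion (invoking, on the finitely many relevant chief factors where it is topologically finitely generated, Zelmanov's solution of the restricted Burnside problem, and passing to inverse limits in general), whence $K/C_{K}(U^{*})$ is torsion and $T=\mathrm{Core}_{G}(U^{*})$ is as required.

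I expect the main obstacle to be precisely this uniformity. The bounds of \cite{ds} and \cite{azeshu} applied to a single monothetic subgroup $\langle x\rangle$ depend on $Pr(\langle x\rangle,G)$, which need not be bounded below on $K$; and even once Baire category supplies a common $U^{*}$, passing from ``a power of $x$ centralises an open subgroup of $U^{*}$'' to ``a power of $x$ centralises all of $U^{*}$'' for every $x\in K$ simultaneously, together with the interaction with $G_{0}$, is where the substantive work lies.
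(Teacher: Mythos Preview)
Your ``if'' direction and your reformulation of the hypothesis (some power of each $x\in K$ has open centraliser) are both correct, and you rightly identify a Baire category argument as the engine and uniformity as the obstacle. The gap is in the step after Baire. From a coset $aH\subseteq\{x\in K:x^{r}\in C_G(U)\}$ you conclude that every element of $\overline H=H/C_H(U^{*})$ is a product of boundedly many elements of order dividing $R$, and then assert that this forces $\overline H$ to be torsion via the restricted Burnside problem. But that implication is false in general: in the profinite dihedral group $\widehat{\mathbb Z}\rtimes C_{2}$ every element is a product of at most two involutions, yet the group is not torsion. Concretely, if $K=\widehat{\mathbb Z}\rtimes C_{2}$, $H=\widehat{\mathbb Z}$ and $aH$ is the nontrivial coset, then every element of $aH$ has order $2$, so your Baire conclusion is satisfied with $r=2$, while $H$ itself is torsion-free. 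Thus the information carried by the single coset $aH$ is not enough; one must use the hypothesis on \emph{all} elements of $K$ in a more structural way, and RBP/Zelmanov do not bridge that gap. (A smaller issue: your Baire cover is indexed by open normal $U$, which need not be countable; this is reparable by using instead the closed sets $\{x\in K:[G:C_G(x^{r})]\le s\}$.)

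The paper supplies exactly the missing structural reduction. Using Proposition~\ref{ds} one first shows (Lemma~\ref{lemma1}) that some power $x^{e(x)}$ of each $x\in K$ lies in a finitely generated torsion-free abelian normal subgroup $A_{x}\le FC(G)$. Because $A_xA_y$ is an $FC$-group its commutator subgroup is torsion, but it also sits inside the torsion-free $A_x\cap A_y$; hence all the $A_x$ commute, and their closure $N$ is an \emph{abelian} normal subgroup with $KN/N$ torsion. Only then is Baire applied, to the abelian group $M=N\cap K$; commutativity makes the covering step work, since $(x_iv)^{l}=x_i^{\,l}v^{l}$ allows one to bound conjugacy-class sizes uniformly on $M^{l}$, giving $Pr(M^{l},G)\ge 1/(ms)$ and hence, via Proposition~\ref{ds}, the desired open $T$. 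Your outline attempts Baire directly on the possibly non-abelian $K$ and then hopes to repair non-commutativity afterwards; as the dihedral example shows, that order of operations cannot succeed without an additional idea.
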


A ``bounded" version of this result was earlier obtained in \cite{azeshu}.
\smallskip

{\it Let $\epsilon>0$ and $K$ be a subgroup of a compact group $G$ such that $Pr(\langle x\rangle,G)>\epsilon$ for any $x\in K$. Then there is an $\epsilon$-bounded number $e$ and an open normal subgroup $T\leq G$ such that the index $[G:T]$ and the order of $[K^e,T]$ are $\epsilon$-bounded.}
\smallskip

Throughout, if $e$ is a positive integer, $H^e$ denotes the subgroup generated by all $e$th powers of elements of a group $H$. We remark that if in the above result $|[K^e,T]|=f$, then $[K^{ef!},T]=1$. Indeed, since $K^e$ normalizes the subgroup $[K^e,T]$, it follows that $K^{e(f-1)!}$ centralizes $[K^e,T]$. If $x\in K^{e(f-1)!}$ and $t\in T$, we have $[x^f,t]=[x,t]^f=1$. Hence, $[K^{ef!},T]=1$ and so $K/C_K(T)$ has $\epsilon$-bounded exponent.
 \smallskip

 An immediate corollary of Theorem \ref{main1} is that if $G$ is a compact group such that $Pr(\langle x\rangle,G)>0$ for any $x\in G$, then $G$ is virtually central-by-torsion, that is, there is an open normal subgroup $T$ such that $G/Z(T)$ is torsion.

Compact torsion groups have attracted significant interest in the past. Wilson showed that any such group possesses a characteristic series of finite length each of whose factors either is a pro-$p$ group for some prime $p$ or is isomorphic to a Cartesian product of isomorphic finite simple groups \cite{wilson83}. This has enabled Zelmanov to prove that compact torsion groups are locally finite, that is, any finite subset in such a group generates a finite subgroup \cite{ze}. The problem whether compact torsion groups have finite exponent remains open for many years (cf \cite[p. 70]{hewitt-ross}). Recall that $G$ has finite exponent $e$ if $G^e = 1$ and $e$ is the least positive number with this property. We remark that if indeed compact torsion groups have finite exponent, then under the hypotheses of Theorem \ref{main1} there is $\epsilon>0$ such that $Pr(\langle x\rangle,G)>\epsilon$ for any $x\in K$ and so there is a number $e$ such that $[K^e,T]=1$.

Other ways for dealing with probability in infinite groups are considered in \cite{amv, atvv, tointon}.

\section{Proof of Theorem \ref{main1}}

We start this section by quoting the following proposition obtained in \cite{azeshu}. As mentioned in the introduction, the result for finite groups was obtained in \cite{ds}.

\begin{proposition}\label{ds}
Let $\epsilon > 0$ and let $G$ be a compact group having a subgroup $K$ such that $Pr(K,G) \geq \epsilon$. Then there is a normal subgroup $T \leq G$ and a subgroup $B \leq K$ such that the indices $[G:T]$ and $[K:B]$ and the order of $[T,B]$ are $\epsilon$-bounded. 
\end{proposition}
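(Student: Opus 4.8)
The plan is to derive the statement from its finite analogue (the theorem of \cite{ds} quoted in the introduction) by re-expressing the hypothesis through the Haar measure, reducing to compact Lie quotients where the finite result applies, and then passing to an inverse limit; the one feature with no finite counterpart, the connected component, is where the real work lies.

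First I would reformulate the hypothesis measure-theoretically. Writing $\mu_G$ and $\mu_K$ for the normalized Haar measures, Fubini's theorem gives
\[
Pr(K,G)=\int_K \mu_G\bigl(C_G(x)\bigr)\,d\mu_K(x),
\]
so $Pr(K,G)\ge\epsilon$ forces, by a Markov-type estimate, a measurable set $A\subseteq K$ with $\mu_K(A)\ge\epsilon/2$ every element of which satisfies $\mu_G(C_G(x))\ge\epsilon/2$. A closed subgroup of a compact group has positive measure precisely when it is open, so each $x\in A$ has an open centralizer of index $[G:C_G(x)]\le 2/\epsilon$; equivalently the conjugacy class $x^G$ is finite of size at most $2/\epsilon$. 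Since the identity component $G_0$ is connected it has no proper open subgroup, so the finite-index inclusion $C_{G_0}(x)\le C_G(x)$ forces $C_{G_0}(x)=G_0$; thus every $x\in A$ centralizes $G_0$, and in particular the centralizer $C_K(G_0)\supseteq A$ is open of index at most $2/\epsilon$ in $K$.

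Next I would reduce to compact Lie groups. Realize $G$ as an inverse limit $G=\varprojlim_i G/N_i$ of compact Lie groups, with the $N_i$ closed normal and $\bigcap_i N_i=1$. For each $i$ the projection $\pi_i$ carries $\mu_G$ and $\mu_K$ to the Haar measures of $L:=G/N_i$ and $\pi_i(K)$, and $[x,y]=1$ implies $[\pi_i x,\pi_i y]=1$, whence $Pr(\pi_i(K),L)\ge Pr(K,G)\ge\epsilon$. Because $L$ is a compact Lie group its identity component $L_0$ has finite index, so the finite group $L/L_0$ inherits $Pr\ge\epsilon$ for the image of $\pi_i(K)$; the finite theorem of \cite{ds} then supplies a normal subgroup and a subgroup of $\epsilon$-bounded indices whose mutual commutator has $\epsilon$-bounded order. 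Pulling these back to $L$ yields a normal subgroup $T_i\le L$ containing $L_0$ and a subgroup $B_i\le\pi_i(K)$, both of $\epsilon$-bounded index, with $[T_i,B_i]$ of $\epsilon$-bounded order modulo $L_0$; choosing $B_i$ inside the centralizer of $L_0$---which has $\epsilon$-bounded index by the first step---ensures in addition that $[L_0,B_i]=1$.

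The main obstacle, and the point where the connected component genuinely intervenes, is to promote ``$[T_i,B_i]$ of bounded order modulo $L_0$'' to ``$[T_i,B_i]$ of bounded order''. Here I would use that $B_i$ centralizes $L_0$, so each commutator $[t,b]$ with $t\in T_i$, $b\in B_i$ lies in the centralizer of $L_0$ and, one checks, depends only on the coset $tL_0$; hence $[T_i,B_i]$ meets $L_0$ inside the centre $Z(L_0)$, and the delicate task is to rule out that this intersection captures the identity component of $Z(L_0)$. The mechanism I would exploit is that a continuous map from the connected group $L_0$ into a finite set is constant, combined with the bound on the sizes of the conjugacy classes of the elements of $A$; this should force the intersection to be finite of $\epsilon$-bounded order. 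Once each $[T_i,B_i]$ is finite of order bounded uniformly in $i$, a compactness (inverse-limit, or König's lemma) argument selects the families $\{T_i\}$ and $\{B_i\}$ coherently and produces a single normal subgroup $T\le G$ and closed subgroup $B\le K$ with $TN_i/N_i=T_i$ and $BN_i/N_i=B_i$. Then $[G:T]$ and $[K:B]$ are $\epsilon$-bounded, while $[T,B]$ is the inverse limit of the groups $[T_i,B_i]$ and so is itself finite of $\epsilon$-bounded order, as required.
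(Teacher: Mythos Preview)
The paper does not prove this proposition at all: it is merely \emph{quoted} from \cite{azeshu} (``We start this section by quoting the following proposition obtained in \cite{azeshu}''), so there is no proof here to compare your attempt against. What you have written is an outline of how one might reprove the cited result, not a comparison target for anything in the present paper.

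As an outline your strategy is the natural one---pass to compact Lie quotients, apply the finite theorem of \cite{ds} to the finite group $L/L_0$, and assemble the answers via an inverse-limit/K\"onig argument---and steps (1)--(5) and (7) are essentially routine. The genuine gap is precisely where you flag it: step (6), controlling $[T_i,B_i]\cap L_0$. You write that the bound on conjugacy-class sizes ``\emph{should} force the intersection to be finite of $\epsilon$-bounded order'', but you do not give a mechanism. Note that having $B_i\le C_L(L_0)$ only yields $[t\ell,b]=\ell^{-1}[t,b]\ell$ for $\ell\in L_0$, i.e.\ the commutator depends on $t$ modulo $L_0$ only up to $L_0$-conjugacy, not literally; and $[T_i,B_i]$ is a priori an abstract (not closed) subgroup, so ``$[T_i,B_i]\cap L_0\le Z(L_0)$'' does not by itself rule out this intersection being, say, a dense infinite subgroup of a torus in $Z(L_0)$. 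Until you supply a concrete argument bounding $|[T_i,B_i]|$ (not merely $|[T_i,B_i]L_0/L_0|$) uniformly in $i$, the inverse-limit step has nothing finite to assemble and the proof is incomplete. If you want to pursue this route, you will need an honest argument here---for instance, working instead with a fixed open normal $T\le G$ produced directly from the centralizers of elements of $A$, rather than level-by-level in the Lie quotients---or else consult \cite{azeshu} for how the connected component is actually handled.
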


Recall that $FC(G)$ denotes the set of all elements $x$ of a group $G$ such that $C_G(x)$ has finite index. Note that in general $FC(G)$ is an abstract subgroup and there are easy examples of compact groups $G$ in which $FC(G)$ is not closed.
\begin{lemma}\label{lemma1}
Let $G$ be a compact group and $x\in G$ be an element such that $Pr(\langle x\rangle, G) > 0$. Then there is a positive integer $e$ such that $x^e$ is contained in an abstract torsion-free abelian normal subgroup $N\leq FC(G)$.
\end{lemma}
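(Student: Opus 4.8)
The plan is to apply Proposition~\ref{ds} to the closed abelian subgroup $K=\overline{\langle x\rangle}$, which satisfies $Pr(K,G)\geq\epsilon$ for some $\epsilon>0$, and then to extract the required subgroup from the resulting finite commutator subgroup using elementary properties of $FC$-groups.

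First I would fix, via Proposition~\ref{ds}, a normal subgroup $T\leq G$ of finite index and a subgroup $B\leq K$ of finite index with $F:=[T,B]$ finite; replacing $B$ by $B\cap T$ I may assume $B\leq T$, and then $x^{n}\in B$ where $n=[K:B]$ (Lagrange in the finite group $K/B$). The key observation is that for $b\in B$ and $t\in T$ one has $b^{t}=b[b,t]\in bF$, so the $T$-conjugacy class of $b$ lies in the finite coset $bF$; by compactness $C_{T}(b)$ is then open, hence of finite index in $G$. Taking $b=x^{n}$ shows that $a:=x^{n}$ lies in $FC(G)$.

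Next I would pass to $Q:=\langle a^{G}\rangle$, the normal closure of $a$ in $G$. Since $a\in FC(G)$, the class $a^{G}$ is finite, so $Q$ is finitely generated, normal in $G$, and contained in the $FC$-group $FC(G)$; being generated by the finitely many conjugates of $a$, its centre $Z(Q)$ (the intersection of the finite-index centralizers of those conjugates) has finite index $M$ in $Q$. Thus $Z(Q)$ is a finitely generated abelian group, so it is the direct sum of a free abelian group $L$ of finite rank and a finite subgroup $\tau'$, say of order $d$. I would then set $N:=Z(Q)^{d}=dZ(Q)$. Since $d$ annihilates $\tau'$, $N$ coincides with $dL$ and is therefore torsion-free abelian; it is characteristic in $Z(Q)$, which is characteristic in the normal subgroup $Q$ of $G$, so $N$ is normal in $G$, a fortiori in $FC(G)$, and $N\leq Q\leq FC(G)$. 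Finally $a^{M}\in Z(Q)$ (as $|Q/Z(Q)|=M$), whence $x^{nMd}=(a^{M})^{d}\in Z(Q)^{d}=N$, and $e:=nMd$ works.

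The two points requiring care are, first, the passage from the finiteness of $[T,B]$ to the finiteness of the conjugacy class of $x^{n}$ — this is precisely what allows one to leave the compact category and argue inside the abstract finitely generated group $Q$ — and, second, the fact that $N$ genuinely cannot be taken closed in general (for instance if $\overline{\langle x\rangle}$ is the circle group, its only torsion-free closed subgroup is trivial), so the final extraction must be performed inside the abstract finitely generated abelian group $Z(Q)$, where the torsion summand splits off.
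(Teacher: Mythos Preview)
Your proof is correct and follows essentially the same route as the paper's: use Proposition~\ref{ds} to land a power of $x$ in $FC(G)$, take its (finitely generated) normal closure, pass to the finite-index centre, and kill the torsion of that finitely generated abelian group. The only cosmetic difference is that the paper works with the subgroup $M$ generated by suitable powers of the conjugates rather than with $Z(Q)$ directly, and takes $N$ to be the normal closure of $x^{e}$ rather than all of $Z(Q)^{d}$; your version is if anything slightly cleaner, and your explicit derivation of ``$x^{n}\in FC(G)$'' from the finiteness of $[T,B]$ fills in a step the paper leaves implicit.
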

\begin{proof}
Since $Pr(\langle x\rangle, G) > 0$, an application of Proposition \ref{ds} shows that there is a positive integer $e_1$ such that $x^{e_1}$ has finitely many conjugates. Let $\{y_1, y_2, \dots, y_k\}$ be the conjugacy class of $x^{e_1}$ and let $L$ be the abstract subgroup generated by $y_1, y_2, \dots, y_k$. Since $[G:C_G(y_i)] = k$ for $i = 1, 2, \dots, k$, it follows that the intersection $\bigcap _{i=1}^k C_L(y_i) = Z(L)$ has index at most $k^k$ in $L$. Set $e_2 =  k^k$ and note that the power $x^{e_1e_2}$ belongs to $Z(L)$. Let $M$ be the abstract subgroup generated by $y_1^{e_2}, y_2^{e_2}, \dots, y_k^{e_2}$. Obviously, $M$ is a finitely generated abelian group. Thus, we conclude that the torsion part of $M$ is finite and denote by $e_3$ the exponent of the torsion subgroup of $M$. Write $e = e_1e_2e_3$ and observe that the minimal abstract normal subgroup $N$ containing $ x^e$ is torsion-free, abelian, and normal. Since $N$ is generated by finitely many $FC$-elements, it follows that $N\leq FC(G)$. The proof is complete. 
\end{proof}

Now we are able to prove Theorem \ref{main1}

\begin{proof}[Proof of Theorem 1.1]   Suppose first that $G$ has an open normal subgroup $T$, say of index $i$, such that $K/C_K(T)$ is torsion. Choose $x\in K$ and note that the probability that a random element of $\langle x\rangle$ centralizes $T$ is at most $\frac{1}{j}$, where $j$ is the order of the image of $\langle x\rangle$ in $K/C_K(T)$. On the other hand, the probability that a random element of $G$ belongs to $T$ is $\frac{1}{i}$ and so we deduce that $Pr(\langle x\rangle, G)\geq\frac{1}{ij}$. In particular,  $Pr(\langle x\rangle, G)>0$ for every $x \in K$.

We now need to prove the other part of the theorem, that is, assuming that $G$ is a compact group having a subgroup $K$ such that $Pr(\langle x\rangle, G) > 0$ for every $x \in K$ we need to show that $G$ has an open normal subgroup $T$ such that $K/C_K(T)$ is torsion. 

For an element $x \in K$ write $e(x)$ to denote the least positive integer $e$ satisfying the conclusion of Lemma \ref{lemma1}.  Also, define $A_x$ as the abstract subgroup generated by the conjugacy class of $x^{e(x)}$. Thus, $A_x$ is torsion-free and abelian.

Let $x$ and $y$ be arbitrary elements of $K$. The product $A_xA_y$ is an abstract $FC$-subgroup of $G$, and so by a well-known property of $FC$-groups the commutator subgroup of $A_xA_y$ is torsion (see \cite[Theorem 14.5.9]{robinson}). On the other hand, the commutator subgroup of $A_xA_y$ is contained in the intersection $A_x\cap A_y$, which is torsion-free. It follows that $A_xA_y$ is abelian. Since this happens for any $x,y\in K$, we conclude that the product $\prod_{x\in K}A_x$ is abelian.

Let $N$ be the topological closure of the abstract subgroup $\prod_{x\in K}A_x$. We see that $N$ is an abelian normal subgroup of $G$ and $KN/N$ is torsion. Set $M = N \cap K$ and, for positive integers $k,s$, define $$M_{k,s} = \{x \in M \, | \, x^k \, \text{has at most $s$ conjugates in $G$}\}.$$ Lemma \ref{lemma1} shows that the sets $M_{k,s}$ cover $M$. We also note that the sets are closed (see in particular \cite[Lemma 2.8]{azeshu}). The Baire Category Theorem \cite[p. 200]{kelley} ensures that at least one of the above sets has non-empty interior. Therefore, for some positive integers $k,s$, there is an open subset $V$ of $M$ such that $x^k$ has at most $s$ conjugates for every $x\in V$. As $M$ is compact, there is a finite subcover $M=\bigcup_{i=1}^n x_iV$ of the cover $M = \bigcup_{x \in M} xV$. Moreover, by Lemma \ref{lemma1}, for each $i = 1, 2, \dots, n$ there exist positive integers $l_i$ and $m_i$ such that $[G:C_G(x_i^{l_i})] = m_i$. Let $l = kl_1l_2\cdots l_n$ and $m = max\{m_1, m_2 \dots, m_n\}$. Taking into account that $M$ is abelian we deduce that any element of $M^l$ has at most $ms$ conjugates in $G$. Therefore $Pr(M^l,G)\geq\frac{1}{ms}$ and, by Proposition \ref{ds}, there is an open normal subgroup $T\leq G$ and an open subgroup $B \leq M^l$ such that the order of $[T,B]$ is finite. 

Suppose that the order of $[T,B]$ is $f$. Since $B$ normalizes $[T,B]$, it follows that $B^{(f-1)!}$ centralizes $[T,B]$  Choose $t\in T$ and $x\in B^{(f-1)!}$. 
 We have $[x^f,t]=[x,t]^f=1$. Thus $B^{f!} \leq C_G(T)$. Obviously, $K/B^{f!}$ is torsion so the proof is complete \end{proof} 
 
\begin{corollary}
Let $G$ be a finitely generated compact group. Then $Pr(\langle x\rangle , G)> 0$ for every $x\in G$ if and only if $G$ is virtually abelian.\end{corollary}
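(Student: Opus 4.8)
The plan is to obtain the corollary as a direct consequence of Theorem~\ref{main1} with $K = G$, the only extra ingredient being that finite generation upgrades the ``torsion'' conclusion of that theorem to a ``finite'' one.

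For the ``if'' direction I would argue as follows. Suppose $G$ has an open abelian subgroup $A$. Since $A$ has finite index, its normal core $T = \bigcap_{g\in G} A^{g}$ is a finite intersection of open subgroups, hence an open normal subgroup of $G$, and it is abelian. Then $C_G(T) \supseteq T$, so $G/C_G(T)$ is a quotient of the finite group $G/T$ and in particular is torsion. By the ``if'' part of Theorem~\ref{main1} (applied with $K = G$), $Pr(\langle x\rangle, G) > 0$ for every $x \in G$.

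For the converse, assume $Pr(\langle x\rangle, G) > 0$ for all $x \in G$. Theorem~\ref{main1} with $K = G$ provides an open normal subgroup $T \leq G$ such that $G/C_G(T)$ is torsion. Being a Hausdorff quotient of the topologically finitely generated group $G$, the group $G/C_G(T)$ is itself a finitely generated compact torsion group; by Zelmanov's theorem \cite{ze} it is locally finite, hence finite. Consequently $C_G(T)$ is an open subgroup of $G$, and therefore $Z(T) = T \cap C_G(T)$ is open in $G$ (being an intersection of two open subgroups) and abelian. Thus $G$ has an open abelian subgroup, i.e.\ $G$ is virtually abelian.

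I expect the only genuinely nontrivial step to be the reduction from ``$G/C_G(T)$ is torsion'' to ``$G/C_G(T)$ is finite'': this is exactly where the hypothesis of finite generation is used, and it relies on the (deep, but already available) fact that compact torsion groups are locally finite. The remaining manipulations with open subgroups and centralizers are routine.
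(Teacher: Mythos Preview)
Your proof is correct and follows essentially the same route as the paper: apply Theorem~\ref{main1} with $K=G$ to reduce to ``virtually central-by-torsion'', and then use Zelmanov's theorem that finitely generated compact torsion groups are finite to upgrade this to ``virtually abelian''. You have simply unpacked the details (the core argument for the easy direction, and the observation $Z(T)=T\cap C_G(T)$ is open) that the paper leaves implicit.
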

\begin{proof} As mentioned in the introduction, $Pr(\langle x\rangle , G)> 0$ for every $x\in G$ if and only if $G$ is virtually central-by-torsion. Taking into account that finitely generated compact torsion groups are finite \cite{ze} the result is straightforward.
\end{proof}

\begin{corollary}
Let $G$ be a compact group, and let $G_0$ be the connected component of identity in $G$. Suppose that $Pr(\langle x\rangle , G)> 0$ for every $x\in G_0$. Then $G$ has a normal profinite subgroup $D$ such that $G_0D$ is open in $G$ and $G_0\leq Z(G_0D)$. 
\end{corollary}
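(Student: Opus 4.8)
The plan is to apply Theorem~\ref{main1} with $K=G_0$ and then to construct $D$ from the structure of $C_G(G_0)$. Applying Theorem~\ref{main1} to the subgroup $G_0$ yields an open normal subgroup $T$ of $G$ such that $G_0/C_{G_0}(T)$ is torsion. This quotient is a continuous image of the connected group $G_0$, hence connected, and it is a compact torsion group; such groups are totally disconnected, as follows from Wilson's structure theorem~\cite{wilson83}, so the quotient is trivial, i.e. $[G_0,T]=1$. In particular $G_0$ is abelian and $T\leq C_G(G_0)=:C$. Since $G_0$ is characteristic in $G$, the subgroup $C$ is normal in $G$; since $C$ contains the open subgroup $T$, it is open, so $[G:C]$ is finite and the identity component of $C$ is $G_0$; and since $C$ centralizes $G_0$ while $G_0$ is abelian, $G_0\leq Z(C)$.

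Next I would show that $L:=\overline{[C,C]}$ is profinite. For every closed normal subgroup $N$ of $C$ for which $C/N$ is a Lie group, the identity component of $C/N$ is the image of $G_0$, a central subgroup; hence $(C/N)/Z(C/N)$ is a quotient of the finite group $(C/N)/(C/N)_0$, and Schur's theorem gives that $[C/N,C/N]$ is finite. As $N$ ranges over such subgroups one has $\bigcap N=\{1\}$, so the natural homomorphism from $L$ into the inverse limit of the finite groups $[C/N,C/N]$ is injective; being a continuous map from a compact group into a profinite one, it exhibits $L$ as a profinite group. Moreover $L$ is characteristic in $C$, hence normal in $G$.

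Now set $B=C/L$, a compact abelian group whose identity component is $B_0=G_0L/L$ and with $B/B_0$ profinite, and let $\Gamma=G/C$, a finite group acting on $B$ by conjugation (the inner action of $C$ on the abelian group $B$ is trivial). The crux of the argument — and the step I expect to be the main obstacle — is to produce a $\Gamma$-invariant profinite subgroup $\bar D\leq B$ with $B_0\bar D=B$. Granting such a $\bar D$, let $D\leq C$ be its preimage: then $D$ is profinite (an extension of the profinite group $\bar D$ by the profinite group $L$), $D$ is normal in $G$ (its image $\bar D$ in $B=C/L$ is $\Gamma$-invariant, and, $B$ being abelian, it is then normal in $G/L$), $G_0D=C$ is open in $G$ because $[G:C]$ is finite, and $G_0\leq Z(C)=Z(G_0D)$; this proves the corollary.

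To obtain $\bar D$ I would pass to Pontryagin duals. Write $A=\widehat{B}$, a discrete abelian group carrying the dual action of $\Gamma$; its torsion subgroup is $t(A)=\widehat{B/B_0}$. Choose, by Zorn's lemma, a maximal torsion-free subgroup $M_0\leq A$, so that $A/M_0$ is torsion. Since $\Gamma$ is finite, $M:=\bigcap_{\gamma\in\Gamma}\gamma(M_0)$ is $\Gamma$-invariant, still torsion-free, and still has $A/M$ torsion, because it embeds into the finite product $\prod_{\gamma\in\Gamma}A/\gamma(M_0)$. Dualizing, the annihilator $\bar D=M^{\perp}\leq B$ is profinite, since its character group $A/M$ is torsion, and it is $\Gamma$-invariant; and because $M$ is torsion-free, the composite $t(A)\hookrightarrow A\to A/M$ is injective, which dually means precisely that the map $\bar D\to B/B_0$ is surjective, i.e. $B_0\bar D=B$. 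The point worth emphasizing is the asymmetry that makes this work: a finite intersection of profinite subgroups $\bar D_i$ with $B_0\bar D_i=B$ need not surject onto $B/B_0$, so one cannot simply intersect the $\Gamma$-conjugates of a non-invariant complement; but the dual property of being torsion-free with torsion quotient \emph{is} preserved under finite intersections, and that is exactly what allows the finiteness of $\Gamma$ to be exploited.
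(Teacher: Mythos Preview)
Your argument is correct, but it takes a substantially different route from the paper's proof. Both proofs begin by applying Theorem~\ref{main1} with $K=G_0$ and observing that the torsion quotient $G_0/C_{G_0}(T)$ must be trivial (the paper phrases this via divisibility of $G_0$, you via the fact that compact torsion groups are totally disconnected; either works). From this point the two arguments diverge.

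The paper simply invokes \cite[Lemma~3.5]{hr1}, which says that a compact group whose identity component is central splits as $T=G_0\Delta$ with $\Delta$ profinite and normal in $T$; it then sets $D=\prod_{x\in G}\Delta^x$, a finite product of profinite normal subgroups of $T$ (hence itself profinite, since the identity component of a product of normal subgroups is the product of the identity components), and reads off the conclusion. Your proof, by contrast, is self-contained: you avoid the Hofmann--Russo lemma by first showing that $L=\overline{[C,C]}$ is profinite (via pro-Lie approximation and Schur's theorem) and then, in the compact abelian quotient $B=C/L$, constructing a $\Gamma$-invariant profinite almost-complement to $B_0$ by Pontryagin duality, using the pleasant observation that torsion-freeness with torsion quotient survives finite intersections on the dual side. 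What you gain is an argument that does not rely on the external structural lemma and that produces a \emph{normal} $D$ directly; what the paper gains is brevity, outsourcing the structure theory to \cite{hr1} and handling normality by taking a finite product of conjugates after the fact.
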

\begin{proof} By Theorem \ref{main1} $G$ has an open normal subgroup $T$ such that $G_0/C_{G_0}(T)$ is torsion. Since $G_0$ is divisible, we deduce that $G_0\leq Z(T)$. The structure of compact groups in which the identity component is central is determined in \cite[Lemma 3.5]{hr1}. Therefore $T=G_0\Delta$, where $\Delta$ is a profinite subgroup that is normal in $T$. Set $D=\prod_{x\in G}\Delta^x$ and observe that $G_0D$ is open in $G$ and $G_0\leq Z(G_0D)$.
\end{proof}

\section{On the $p$-structure of a profinite group} 

A multilinear commutator word is a group-word obtained by nesting commutators, but using always different variables. Such words are also known as outer commutator words.  For example, the word $[[[x_1, x_2], [[x_3, x_4], x_5]], x_6]$ is a multilinear commutator, while the Engel word $[[[x_1, x_2], x_2], x_2]$ is not. Among well-known multilinear commutator words there are simple commutators $[x_1, . . . , x_k]$, for which the corresponding verbal subgroups are the terms of the lower central series. Another distinguished sequence of multilinear commutators is formed by the derived words, which are defined recursively as $\delta_0 = x_1, \delta_k =[\delta_{k -1}(x_1,...,x_{2^{k-1}}),\delta_{k-1}(x_{2^{k-1}+1},...,x_{2^k})]$; the corresponding verbal subgroups are the terms of the derived series.

Let $w$ be a multilinear commutator word. For a (profinite) group $G$, we denote by $G_w$ the set of all values of $w$ on elements of $G$ and by $w(G) = \langle G_w\rangle$ the corresponding (closed) verbal subgroup. Let $P$ be a subgroup of a group $G$. Following \cite{khushu14} we denote by $W_G(P)$ the subgroup of $P$ generated by all elements of $P$ that are conjugate in $G$ to $w$-values on elements of $P$, that is, $$W_G(P) = \langle P_w^G \cap P\rangle.$$ \noindent When it causes no confusion, we write $W(P)$ in place of $W_G(P)$.

Let $p$ be a prime number. A profinite group $G$ is pro-$p$-soluble if it has a characteristic series of finite length in which each factor either is pro-$p$ or pro-$p'$. The minimal number (denoted by $l_p(G)$) of pro-$p$ factors in a series of this kind is called $p$-length of $G$. The profinite group $G$ is said to have finite non-$p$-soluble length (denoted by $\lambda_p(G)$) if it has a characteristic series of finite length in which each factor either is pro-$p$-soluble or is isomorphic to a Cartesian product of nonabelian finite simple groups of order divisible by $p$. In this case $\lambda_p(G)$ is the minimal number of non-pro-$p$-soluble factors in such a series. The parameter $\lambda_p(G)$ was introduced (for finite groups) in \cite{khushu15}.

It was shown in \cite{khushu14} that if $w$ is a multilinear commutator word and $P$ is a Sylow $p$-subgroup of a profinite group $G$ such that $W_G(P)$ is torsion, then
\begin{enumerate}
\item $\lambda_p(G)$ is finite;
\item If $G$ is pro-$p$-soluble, then $l_p(G)$ is finite.
\end{enumerate}
In the case where $w=x$ and $p$ is odd the above results were obtained in Wilson \cite{wilson83}.

The above results have the following probabilistic generalization. 


\begin{theorem}\label{length} Let $w$ be a multilinear commutator, $p$ a prime, and $P$ a Sylow $p$-subgroup of a profinite group $G$ such that $Pr(\langle x\rangle, G)>0$ for every $x\in W_G(P)$. Then $\lambda_p(G)<\infty$. If $G$ is pro-$p$-soluble, then $l_p(G)<\infty$.
\end{theorem}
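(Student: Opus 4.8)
The plan is to derive Theorem \ref{length} from Theorem \ref{main1} combined with the known torsion result of \cite{khushu14}. The hypothesis says that $Pr(\langle x\rangle,G)>0$ for every $x$ in the subgroup $K:=W_G(P)$, so Theorem \ref{main1} applies with this choice of $K$: there is an open normal subgroup $T\trianglelefteq G$ such that $W_G(P)/C_{W_G(P)}(T)$ is torsion. Since $T$ is open and $G$ is profinite, $T$ has finite index, and replacing $T$ by the intersection of its finitely many conjugates we may assume $T$ is normal in $G$ (it already is). The key point is that passing to $T$ affects $\lambda_p$ and $l_p$ only by a bounded additive constant: a series for $T$ extends to a series for $G$ by adjoining the finite factor $G/T$, which contributes at most one pro-$p$-soluble (indeed finite) factor. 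So it suffices to prove $\lambda_p(T)<\infty$, and $l_p(T)<\infty$ when $G$ (hence $T$) is pro-$p$-soluble.

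Next I would identify a Sylow $p$-subgroup of $T$ and control $W_T$ of it. Let $Q=P\cap T$, a Sylow $p$-subgroup of $T$ since $[G:T]$ is finite. The heart of the argument is to show that $W_T(Q)$ is torsion, so that the cited theorem of \cite{khushu14} applies to $T$ directly and yields $\lambda_p(T)<\infty$ (and $l_p(T)<\infty$ in the pro-$p$-soluble case). To see that $W_T(Q)$ is torsion, observe first that any $w$-value on elements of $Q\subseteq P$ is a $w$-value on elements of $P$, and any $T$-conjugate of such a value lying in $Q$ also lies in $P$, hence lies in $P_w^G\cap P$; therefore $W_T(Q)\leq W_G(P)\cap T$. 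Now $W_T(Q)$ is a subgroup of $W_G(P)$, and by the conclusion of Theorem \ref{main1} its image in $W_G(P)/C_{W_G(P)}(T)$ is torsion. Moreover $W_T(Q)$ is contained in $T$, and I claim that the subgroup $W_T(Q)\cap C_{W_G(P)}(T)$ is central in $W_T(Q)$ — indeed $W_T(Q)\leq T$, so $C_{W_G(P)}(T)$ centralizes all of $W_T(Q)$. Hence $W_T(Q)$ is a (topological) group whose centre has torsion quotient; one then argues that such a profinite group is itself torsion. For this last implication one uses that in a profinite group, if $Z(H)$ has finite index in... more carefully: the quotient $W_T(Q)/Z$ is torsion where $Z=Z(W_T(Q))\supseteq W_T(Q)\cap C_{W_G(P)}(T)$, and a profinite group that is central-by-torsion is itself torsion (each element $x$ has a power $x^n$ central, and $x^n$ generates a procyclic central subgroup; using that $W_T(Q)$ is generated by $w$-values which each have bounded... ). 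Actually the clean route: apply Lemma \ref{lemma1} inside $G$ to elements of $W_G(P)$ to land inside an abstract torsion-free abelian $FC$-subgroup, then note this subgroup, being torsion-free abelian and contained in a profinite group where the relevant elements have finite-index centralizers, must be trivial after taking appropriate powers — mirroring the closing argument of the proof of Theorem \ref{main1} which shows $K/C_K(T)$ is torsion with $C_K(T)=B^{f!}$ a subgroup on which the relevant powers are trivial.

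Let me streamline: the cleanest plan is to run the \emph{proof} of Theorem \ref{main1} with $K=W_G(P)$ and extract the intermediate conclusion that there is an open normal $T\trianglelefteq G$ and a positive integer $l$ with $W_G(P)^l\cap T$ central in $T$ — more precisely $[T,\,W_G(P)^{l}]$ finite of order $f$, so $[T,W_G(P)^{lf!}]=1$. Put $e=lf!$, $R=W_G(P)^{e}\cap T$; then $R\leq Z(T)$. Now $Q_0:=Q^{e}$ is a subgroup of $R$ consisting of $p$-elements that are $w$-value-powers, and crucially $W_T(Q_0)$... hmm, but $W_T(Q)$ need not be generated by $e$th powers. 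So instead I would invoke the known theorem of \cite{khushu14} not for $T$ but directly, using that $W_G(P)$ modulo its centralizer of $T$ is torsion to conclude $W_G(P)\cap T$ is \emph{virtually central in $T$ of finite exponent modulo the centre}, and combine with Zelmanov's theorem (\cite{ze}, quoted in the excerpt) that compact torsion groups are locally finite: since $W_T(Q)$ embeds, after factoring the central part, into a torsion profinite group, and $W_T(Q)$ is a profinite group generated by conjugacy classes of bounded size, $W_T(Q)$ is itself torsion. Then \cite{khushu14}(1)--(2) applied to $T$ with its Sylow $Q$ give $\lambda_p(T)<\infty$ and, in the pro-$p$-soluble case, $l_p(T)<\infty$; adding back the finite top factor $G/T$ gives $\lambda_p(G)<\infty$ and $l_p(G)<\infty$.

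I expect the main obstacle to be precisely the step showing $W_T(Q)$ is torsion, i.e.\ promoting ``$W_G(P)/C_{W_G(P)}(T)$ is torsion'' to ``$W_T(Q)$ is torsion.'' The subtlety is that $W_T(Q)$ is a \emph{closed} subgroup generated by a \emph{set} of $w$-values-up-to-$T$-conjugacy, and one must ensure that the torsion conclusion for individual elements (Lemma \ref{lemma1}, giving finite-index centralizers and torsion-free-abelian-by-finite local structure) upgrades to a uniform bound turning the whole closed subgroup torsion; here one uses that the elements generating $W_T(Q)$ lie in $W_G(P)$, have $G$-conjugacy classes of size bounded via $[G:T]$ together with the torsion exponent of $W_G(P)/C_{W_G(P)}(T)$ on the relevant powers, so that $W_T(Q)$ is a profinite $FC$-group with bounded class-sizes on a power, hence (central-by-bounded-exponent on that power, and the centre being a quotient of a product of torsion-free abelians that must collapse) torsion. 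Once this is in hand, the rest is a direct citation of \cite{khushu14} plus the trivial stability of $\lambda_p$ and $l_p$ under passing to an open normal subgroup.
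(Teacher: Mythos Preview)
Your opening move is exactly right: apply Theorem \ref{main1} with $K=W_G(P)$ to get an open normal $T\trianglelefteq G$ such that $K/C_K(T)$ is torsion. But the subsequent plan---restrict to $T$, set $Q=P\cap T$, and show $W_T(Q)$ is torsion so as to invoke \cite{khushu14} for $T$---has a genuine gap: $W_T(Q)$ need \emph{not} be torsion. You correctly observe that $W_T(Q)\leq W_G(P)\cap T$ and that $W_T(Q)\cap C_G(T)\leq Z(T)$, so $W_T(Q)$ is central-by-torsion; but ``central-by-torsion'' does not imply ``torsion'' for profinite groups, and your attempts to salvage this (via Lemma \ref{lemma1}, $FC$-arguments, or rerunning the proof of Theorem \ref{main1}) cannot succeed. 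Concretely, take $G$ to be the Heisenberg group over $\mathbb{Z}_p$ (unipotent upper-triangular $3\times3$ matrices), $w=[x_1,x_2]$, and $P=G$. Then $W_G(P)=G'=Z(G)\cong\mathbb{Z}_p$, so $Pr(\langle x\rangle,G)=1$ for every $x\in W_G(P)$; Theorem \ref{main1} is witnessed by $T=G$, whence $Q=G$ and $W_T(Q)=G'\cong\mathbb{Z}_p$ is torsion-free.

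The paper's fix is to pass to a \emph{quotient} rather than to a subgroup: set $Z=Z(T)$ and work in $G/Z$. Since $G/T$ is finite and $K/C_K(T)$ is torsion, every element of $K$ has a power lying in $T\cap C_G(T)=Z$, so the image of $K$ in $G/Z$ is torsion. By Lemma \ref{wgp}(2) this image is exactly $W_{G/Z}(PZ/Z)$, so the torsion hypothesis of \cite{khushu14} holds for $G/Z$, giving $\lambda_p(G/Z)<\infty$ (and $l_p(G/Z)<\infty$ in the pro-$p$-soluble case). As $Z$ is abelian, the conclusion for $G$ follows immediately. The moral: the obstruction you kept running into---a possible torsion-free central piece---is precisely what one should quotient out, not carry along.
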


The following lemma is taken from \cite{khushu14}. 

\begin{lemma}\label{wgp} Let $w$ be a multilinear commutator, $p$ a prime, and $P$ a Sylow $p$-subgroup of a profinite group $G$.
\begin{enumerate}
\item If $G_1\leq G$ and $P_1\leq P$, then $W_{G_1}(P_1)\leq W_G(P)$;
\item If $N$ is a normal subgroup of $G$, then $W_{G/N}(PN/N)=W_G(P)N/N$.
\end{enumerate}
\end{lemma}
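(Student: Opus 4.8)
The plan is to treat the two assertions separately: part (1) is a direct inclusion of the defining generating sets, while part (2) requires a Sylow-conjugacy argument inside the auxiliary subgroup $PN$. For part (1), I would simply track the defining sets. Since $P_1 \leq P$, every $w$-value on elements of $P_1$ is a $w$-value on elements of $P$, so $(P_1)_w \subseteq P_w$. Conjugating by elements of $G_1 \leq G$ gives $(P_1)_w^{G_1} \subseteq P_w^G$, and intersecting with $P_1 \subseteq P$ yields $(P_1)_w^{G_1}\cap P_1 \subseteq P_w^G \cap P$. Passing to the subgroups topologically generated by these sets gives $W_{G_1}(P_1) \leq W_G(P)$, as required.

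For part (2), denote by $\bar x$ the image of an element $x$ under the projection $G \to G/N$, so that $\bar P = PN/N$ is a Sylow $p$-subgroup of $\bar G = G/N$. Two preliminary observations drive the argument. First, the set $P_w^G \cap P$ is invariant under conjugation by $P$: if $x \in P_w^G \cap P$ and $q \in P$, then $x^q$ is again both a $G$-conjugate of a $w$-value on $P$ and an element of $P$. Hence $W_G(P) = \langle P_w^G \cap P\rangle$ is normal in $P$, and therefore $\overline{W_G(P)} = W_G(P)N/N$ is normal in $\bar P$. Second, the image of a $w$-value on $P$ is a $w$-value on $\bar P$, and the image of a $G$-conjugate is a $\bar G$-conjugate, so $\bar x \in (\bar P)_w^{\bar G} \cap \bar P$ for every $x \in P_w^G \cap P$. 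Taking closed subgroups generated by images gives the inclusion $\overline{W_G(P)} \leq W_{\bar G}(\bar P)$.

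The reverse inclusion is the crux. Take a generator $\bar y \in (\bar P)_w^{\bar G}\cap \bar P$, say $\bar y = \overline{u^g}$ with $u = w(p_1,\dots,p_k)$ a $w$-value on $P$ and $g \in G$, and put $v = u^g \in P_w^G$. Since $\bar y \in \bar P = PN/N$ we have $v \in PN$, and since $v$ is conjugate to the $p$-element $u \in P$, the procyclic group $\langle v\rangle$ is a pro-$p$ subgroup of $PN$. Now $P$ is itself a Sylow $p$-subgroup of $PN$ (any pro-$p$ subgroup of $PN$ containing $P$ equals $P$ by maximality in $G$), and Sylow $p$-subgroups of the closed, hence profinite, subgroup $PN$ are conjugate under $PN$; hence $\langle v\rangle \leq P^{h}$ for some $h \in PN$. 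Then $v^{h^{-1}} = u^{gh^{-1}}$ is a $G$-conjugate of the $w$-value $u$ lying in $P$, so $v^{h^{-1}} \in P_w^G \cap P \subseteq W_G(P)$, whence $\bar y^{\,\bar h^{-1}} = \overline{v^{h^{-1}}} \in \overline{W_G(P)}$. Because $\bar h \in \bar P$ and $\overline{W_G(P)}$ is normal in $\bar P$, this forces $\bar y \in \overline{W_G(P)}$. As $\bar y$ was an arbitrary generator, $W_{\bar G}(\bar P) \leq \overline{W_G(P)}$, and combining the two inclusions gives the claimed equality.

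The step I expect to be the main obstacle is the passage $v \rightsquigarrow v^{h^{-1}} \in P$: it is precisely here that one must invoke profinite Sylow theory for $PN$ to relocate the conjugate $w$-value $v$ back inside $P$, and then use the normality of $W_G(P)$ in $P$ to descend the conclusion from $\bar y^{\,\bar h^{-1}}$ to $\bar y$ itself. Everything else is routine bookkeeping with the defining generating sets.
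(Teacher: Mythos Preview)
Your argument is correct. The paper does not actually prove this lemma: it is quoted verbatim from \cite{khushu14}, with no proof supplied here, so there is no ``paper's own proof'' to compare against beyond the citation.

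That said, your proof is essentially the standard one. Part~(1) is pure bookkeeping, as you say. For part~(2), the forward inclusion is immediate from functoriality of $w$-values and conjugation under the quotient map. The only nontrivial step is exactly the one you flagged: given a generator $\bar y \in (\bar P)_w^{\bar G}\cap \bar P$, you lift the $w$-value to $P$ (which works because every element of $PN/N$ has a representative in $P$), land the conjugate $v$ in $PN$, and then use Sylow conjugacy in the profinite group $PN$ to move $v$ back into $P$ by some $h\in PN$. The normality of $W_G(P)$ in $P$ (hence of its image in $\bar P$) then lets you undo the $\bar h$-conjugation. Two small points worth making explicit in a final write-up: (i) when you write $\bar y=\overline{u^g}$ with $u$ a $w$-value on $P$, you are implicitly using that any element of $\bar P=PN/N$ lifts to $P$, so a $w$-value on $\bar P$ lifts to a $w$-value on $P$; and (ii) $W_G(P)N/N$ is closed because $W_G(P)$ is compact and $N$ is closed, so the containment of generators really does give containment of the closed subgroups they generate. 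With those remarks your proof is complete.
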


\begin{proof}[Proof of Theorem \ref{length}.] Set $K=W_G(P)$. By Theorem \ref{main1} the group $G$ contains an open normal subgroup $T$ such that $K/C_K(T)$ is torsion. Let $Z=Z(T)$. Obviously, the image of $K$ in $G/Z$ is torsion. In view of Lemma \ref{wgp} the aforementioned results from \cite{khushu14} imply that 
\begin{enumerate}
\item $\lambda_p(G/Z)$ is finite;
\item If $G/Z$ is pro-$p$-soluble, then $l_p(G)$ is finite.
\end{enumerate}
Since the subgroup $Z$ is abelian, the theorem follows.
\end{proof}

\section{Centralizers of coprime automorphisms of profinite groups}

We say that a continuous automorphism $\phi$ of a profinite group $G$ is coprime if $\phi$ has finite order which is coprime to the orders of elements of $G$ (understood as Steinitz numbers). The following lemma is an obvious extension to profinite groups of a well-known fact on coprime automorphisms of finite groups (see for instance \cite[Theorem 6.2.2]{gor}). 
\begin{lemma}\label{gor}
Let $G$ be a profinite group admitting a coprime automorphism $\phi$ and let $N$ be a $\phi$-invariant normal subgroup of $G$. Then $C_{G/N}({\phi}) = C_G(\phi)N/N$. 
\end{lemma}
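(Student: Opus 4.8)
\medskip
\noindent\textbf{Proof proposal for Lemma \ref{gor}.} The plan is to deduce the profinite statement from its finite counterpart by a routine inverse-limit argument. One inclusion is immediate: if $c\in C_G(\phi)$ then $cN\in C_{G/N}(\phi)$, so $C_G(\phi)N/N\subseteq C_{G/N}(\phi)$. For the reverse inclusion, fix $g\in G$ with $gN\in C_{G/N}(\phi)$; the goal is to produce $c\in C_G(\phi)$ with $cN=gN$. Note that $\phi(g)\in gN$, so $\phi$ maps the compact coset $gN$ onto itself.

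First I would record two preliminary facts. One: the $\phi$-invariant open normal subgroups of $G$ form a base of neighbourhoods of the identity. Indeed, given an open normal subgroup $M_0$, its orbit $\{M_0,\phi(M_0),\dots\}$ is finite because $\phi$ has finite order, and the intersection of this orbit is an open normal $\phi$-invariant subgroup contained in $M_0$; since the open normal subgroups form such a base, so do the $\phi$-invariant ones. Two: for every $\phi$-invariant open normal subgroup $M$, the automorphism $\bar\phi$ induced by $\phi$ on the finite group $G/M$ is coprime in the usual sense. This is because any prime $q$ dividing $|G/M|$ divides the Steinitz order of $G$, hence divides the order of some element of $G$ (a non-trivial pro-$q$ Sylow subgroup of $G$ contains elements whose Steinitz order is divisible by $q$), and therefore $q$ does not divide the order of $\phi$.

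Next, fix a $\phi$-invariant open normal subgroup $M$ and apply the finite case \cite[Theorem 6.2.2]{gor} to the finite group $G/M$ equipped with its $\bar\phi$-invariant normal subgroup $NM/M$: the image of $g$ in $(G/M)\big/(NM/M)\cong G/NM$ is fixed by $\bar\phi$, so there is $c_0\in G$ with $\phi(c_0)\in c_0M$ and $c_0\in gNM$. Writing $c_0=gnm$ with $n\in N$ and $m\in M$, and replacing $c_0$ by $c_0m^{-1}$, one obtains an element $c\in gN$ with $\phi(c)\in cM$. Hence the set
\[
T_M=\{\,c\in gN:\ \phi(c)c^{-1}\in M\,\}
\]
is non-empty; it is also closed, being the intersection of the compact set $gN$ with the preimage of $M$ under the continuous map $x\mapsto\phi(x)x^{-1}$.

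Finally I would invoke compactness. The family $\{T_M\}$, as $M$ ranges over the $\phi$-invariant open normal subgroups of $G$, is closed under finite intersections, since $T_{M_1\cap M_2}\subseteq T_{M_1}\cap T_{M_2}$ and $M_1\cap M_2$ is again $\phi$-invariant, open and normal. As $G$ is compact, it follows that $\bigcap_M T_M\neq\emptyset$. Any $c$ in this intersection satisfies $\phi(c)c^{-1}\in\bigcap_M M=1$ (by the first preliminary fact), so $c\in C_G(\phi)$, while $c\in gN$ forces $gN=cN\subseteq C_G(\phi)N$. This gives $C_{G/N}(\phi)\subseteq C_G(\phi)N/N$ and completes the argument. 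The only genuinely non-formal ingredient is the finite case, which is quoted; the two points to handle with a little care are the correction of $c_0$ by an element of $M$ so that it actually lies in $gN$ rather than merely in $gNM$, and the verification that $\bar\phi$ remains coprime on each finite quotient $G/M$.
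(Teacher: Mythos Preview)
Your argument is correct. The paper does not actually supply a proof of this lemma: it simply cites the finite case \cite[Theorem 6.2.2]{gor} and declares the profinite version to be ``an obvious extension''. Your inverse-limit/compactness argument is exactly the standard way to make that extension precise, and the details you single out (coprimeness of $\bar\phi$ on each finite quotient $G/M$, the adjustment of $c_0$ by an element of $M$ so that it lies in $gN$ rather than merely $gNM$, and the finite intersection property of the sets $T_M$) are all handled properly.
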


In what follows $A^{\#}$ stands for the set of nontrivial elements of a group $A$. 
Let $p$ be a prime. If $G$ is a finite $p'$-group admitting a noncyclic group $A$ of automorphisms of order $p^2$ such that $C_G(\phi)$ has exponent dividing $d$ for every $\phi \in A^{\#}$, then the exponent of $G$ is bounded in terms of $d$ and $p$ only \cite{ks exponent}. If $G$ is acted on by an elementary abelian automorphism group $A$ of order $p^3$ such that the exponent of the commutator subgroup $C_G(\phi)'$ divides $d$ for every $\phi \in A^{\#}$, then the exponent of $G'$ is $(d,p)$-bounded \cite{gushu}.

Non-quantitative profinite variations of these results were obtained in \cite{shuexponent} and \cite{aap}, respectively:
\begin{theorem}\label{helpp}
Let $p$ be a prime and $G$ a pro-$p'$ group admitting an elementary abelian $p$-group of automorphisms $A$.
\begin{enumerate}
 \item If $A$ is noncyclic and $C_G(\phi)$ is torsion for all $\phi \in A^{\#}$, then $G$ is torsion.
 \item If $A$ is of rank at least three and the commutator subgroup $C_G(\phi)'$ is torsion for all $\phi \in A^{\#}$, then $G'$ is torsion.
  \end{enumerate}
\end{theorem}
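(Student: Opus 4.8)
The plan is to prove the two statements together, reducing each to a question about \emph{topologically finitely generated} profinite groups and then exploiting the way a coprime elementary abelian action forces $G$ (respectively $G'$) to be generated by finitely many of the centralizers $C_G(\phi)$.

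\emph{Common reductions.} In both parts one may replace $A$ by a subgroup of rank exactly $2$ in (1), respectively exactly $3$ in (2): shrinking $A$ only shrinks $A^{\#}$, so the hypotheses are preserved, and these are the smallest admissible ranks. By Zelmanov's theorem that compact torsion groups are locally finite \cite{ze}, each $C_G(\phi)$ in part (1), respectively each $C_G(\phi)'$ in part (2), is locally finite. To show $G$ (respectively $G'$) is torsion it suffices, given $g\in G$ (respectively $g,h\in G$), to prove that the smallest $A$-invariant closed subgroup $H$ containing $g$ (respectively $g$ and $h$) is finite; since $A$ is finite, $H$ is topologically finitely generated, $A$-invariant and pro-$p'$, and $C_H(\phi)\le C_G(\phi)$. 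So we may assume $G$ itself is topologically finitely generated. A topologically finitely generated locally finite profinite group is finite; hence every topologically finitely generated locally finite continuous quotient or closed $A$-invariant section of $G$ is in fact finite. This ``finitely generated plus torsion equals finite'' principle is what lets one leverage the otherwise qualitative torsion hypothesis.

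\emph{Part (1).} For a coprime action of an elementary abelian group $A$ of order $p^2$ the group $G$ is topologically generated by the finitely many subgroups $C_G(\phi_1),\dots,C_G(\phi_{p+1})$, where $\langle\phi_1\rangle,\dots,\langle\phi_{p+1}\rangle$ are the subgroups of order $p$ in $A$ (lift the corresponding fact for finite groups through the finite quotients, using Lemma \ref{gor}). I would then argue by induction on the Fitting height in the prosoluble case. When $G$ is pronilpotent it is the Cartesian product of its $A$-invariant Sylow subgroups, so it suffices to treat a topologically finitely generated pro-$r$ group $G_r$ with $r\ne p$: here $[G_r,\phi_i]$ is normal and $G_r/[G_r,\phi_i]$ is a quotient of $C_{G_r}(\phi_i)$, hence finite, so $[G_r,\phi_i]$ is open, and the descending series $G_r\ge\bigcap_i[G_r,\phi_i]\ge\bigcap_i[\bigcap_j[G_r,\phi_j],\phi_i]\ge\cdots$ has open terms with finite successive quotients. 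A character-theoretic observation forces its intersection to be trivial: if a nontrivial finite $r$-group $Q$ admits a faithful coprime action of $C_p\times C_p$, then a $p$-power-order character of $A$ has cyclic image and hence a kernel of rank at least one, so some $\phi\in A^{\#}$ has nonzero fixed points on $Q/\Phi(Q)$, whence $\bigcap_i[Q,\phi_i]$ is proper in $Q$; applied to the finite $A$-quotients of the stable term of the series this makes it trivial, and finitely many finite layers with trivial intersection force $G_r$ finite. Descending the Fitting series of a general prosoluble $G$ via $A$-invariant sections completes the prosoluble case, and the passage from prosoluble to arbitrary $G$ is handled by the structure theory of profinite groups in the spirit of Wilson \cite{wilson83} and of Section 3 above: a coprime action with locally finite centralizers permits only finitely many, and controlled, nonabelian composition factors, so the classification of finite simple groups reduces the general case to the prosoluble one.

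\emph{Part (2), and the main obstacle.} The essential new ingredient is the rank-three analogue of coprime generation for commutator subgroups: for $A=C_p\times C_p\times C_p$ acting coprimely, $G'$ is controlled by the subgroups $C_G(\phi)'$ together with the ``cross'' subgroups $[C_G(\phi_i),C_G(\phi_j)]$, and each such cross subgroup is $A$-invariant with $C_G(\phi_i)\cap C_G(\phi_j)=C_G(\langle\phi_i,\phi_j\rangle)$, so that the order-$p$ quotient $A/\langle\phi_i,\phi_j\rangle$ still acts on it. This is the combinatorial heart of the argument (the quantitative version is in \cite{gushu}), and it is precisely why rank three rather than two is needed for $G'$ (and rank $k+2$ for $\gamma_{k+1}(G)$): each commutator one forms costs one unit of rank. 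With this generation in place, part (2) follows by feeding the resulting $A$-invariant sections into the machine of part (1), invoking Theorem \ref{helpp}(1) on the pieces where only the weaker ``torsion centralizer'' property survives. The genuine difficulties are therefore concentrated in (i) establishing the commutator-generation lemmas for rank-three coprime actions and (ii) the reduction from arbitrary profinite groups to prosoluble ones via CFSG; the remainder is bookkeeping with coprime action together with the finitely-generated-plus-torsion-equals-finite principle.
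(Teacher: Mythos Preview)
The paper does not prove Theorem~\ref{helpp} at all: it is quoted verbatim from the literature, with part~(1) attributed to \cite{shuexponent} and part~(2) to \cite{aap}. So there is no ``paper's own proof'' to compare against; the theorem functions here purely as a black box feeding into the probabilistic corollary that follows it.

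As for your sketch on its own merits: the reductions (shrink $A$ to minimal rank, pass to a topologically finitely generated $A$-invariant subgroup, use coprime generation $G=\langle C_G(\phi_i)\rangle$) are correct and are indeed how the cited proofs begin. The gap is in your pro-$r$ step. You build the descending series $S_0=G_r\ge S_1=\bigcap_i[G_r,\phi_i]\ge S_2=\bigcap_i[S_1,\phi_i]\ge\cdots$, correctly observe that each $S_k$ is open, and then argue that the intersection is trivial and that ``finitely many finite layers with trivial intersection force $G_r$ finite''. But nothing you have said bounds the number of layers: a descending chain of open normal subgroups with trivial intersection exists in every infinite pro-$r$ group (e.g.\ $r^k\mathbb{Z}_r$ in $\mathbb{Z}_r$), so this cannot by itself yield finiteness. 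Your character observation only shows that in each \emph{finite} $A$-quotient the analogous series reaches~$1$; it does not give a uniform bound on the length, which is what you would need.

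The actual arguments in \cite{shuexponent} and \cite{aap} do not proceed by this kind of descending-series bookkeeping. After the same reduction to a finitely generated $A$-invariant subgroup, they pass to the associated Lie ring and invoke Zelmanov's theorems on Lie algebras with Engel/PI conditions (together with the Hall--Higman/Lazard machinery and, for part~(2), the commutator-generation lemmas from \cite{gushu} that you correctly identify). That Lie-theoretic input is the missing engine in your proposal; without it, or without the quantitative exponent bounds of \cite{ks exponent} and \cite{gushu} applied uniformly over finite quotients, the pronilpotent base case does not close.
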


 Now we are able to supply probabilistic variants: 

\begin{theorem}
Let $p$ be a prime and $G$ a pro-$p'$ group admitting an elementary abelian $p$-group of automorphisms $A$.
\begin{enumerate}
 \item If $A$ is noncyclic and $Pr(\langle x\rangle , G) > 0$ for every $x \in C_G(\phi)$ and $\phi \in A^{\#}$, then $G$ is virtually central-by-torsion.
 \item If $A$ is of rank at least three and $Pr(\langle x\rangle , G) > 0$ for every $x \in C_G(\phi)'$ and $\phi\in A^{\#}$, then $G'$ is virtually central-by-torsion.
  \end{enumerate}
\end{theorem}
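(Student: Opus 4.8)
The plan is to derive both parts from Theorem~\ref{main1}, from the coprime centralizer Lemma~\ref{gor}, and from Theorem~\ref{helpp}, by reducing each assertion to its non-probabilistic counterpart on a quotient of $G$ by a suitable abelian normal subgroup. First I would pass to the case in which $A$ is finite: in part (1) replace $A$ by a subgroup of rank $2$, and in part (2) by a subgroup of rank $3$; the probabilistic hypotheses are inherited (every nontrivial element of the smaller group lies in $A^{\#}$), and the rank conditions used below are unaffected. For $\phi\in A^{\#}$ set $K_\phi=C_G(\phi)$ in part (1) and $K_\phi=C_G(\phi)'$ in part (2); in either case $K_\phi$ is a closed subgroup of $G$ with $Pr(\langle x\rangle,G)>0$ for every $x\in K_\phi$, so by Theorem~\ref{main1} there is an open normal subgroup $T_\phi$ of $G$ with $K_\phi/C_{K_\phi}(T_\phi)$ torsion. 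Since $A$ is finite, $T=\bigcap_{\phi\in A^{\#}}\bigcap_{\alpha\in A}\alpha(T_\phi)$ is an $A$-invariant open normal subgroup of $G$, and, since passing to a smaller subgroup only enlarges the centralizer, $K_\phi/C_{K_\phi}(T)$ remains torsion for every $\phi\in A^{\#}$.

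Next I would observe that, as $[G:T]$ is finite, the image of each $K_\phi$ in $\bar G:=G/Z(T)$ is torsion: for $x\in K_\phi$ one has $x^{[G:T]}\in T$, while $x^{m}\in C_G(T)$ for some $m\geq 1$ by the torsion of $K_\phi/C_{K_\phi}(T)$, so a power of $x$ lies in $T\cap C_G(T)=Z(T)$. Here $Z(T)$ is abelian, normal in $G$ and $A$-invariant, and $\bar G$ is again a pro-$p'$ group on which $A$ acts. Since $\phi$ has $p$-power order and $G$ is pro-$p'$, $\phi$ is a coprime automorphism, so Lemma~\ref{gor} yields $C_{\bar G}(\phi)=C_G(\phi)Z(T)/Z(T)$; thus $C_{\bar G}(\phi)$ is exactly the image of $C_G(\phi)$, and hence $C_{\bar G}(\phi)'$ is exactly the image of $C_G(\phi)'$. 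Consequently, in part (1) $C_{\bar G}(\phi)$ is torsion for every $\phi\in A^{\#}$, and in part (2) $C_{\bar G}(\phi)'$ is torsion for every $\phi\in A^{\#}$.

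To conclude I would apply Theorem~\ref{helpp} to $\bar G$ with the induced action of $A$; if some nontrivial element of $A$ acts trivially on $\bar G$ the required conclusion for $\bar G$ is immediate, and otherwise the action is faithful and the rank hypotheses persist. In part (1) this gives that $\bar G=G/Z(T)$ is torsion, so $G$ is virtually central-by-torsion, with $T$ as witness. In part (2) it gives that $\bar G'=G'Z(T)/Z(T)$ is torsion, that is, $G'/(G'\cap Z(T))$ is torsion; then $V:=G'\cap T$ is closed of finite index in $G'$, hence open in $G'$, and normal in $G$, and $G'\cap Z(T)\leq Z(V)$ because $Z(T)$ centralizes $T\supseteq V$ whereas $G'\cap Z(T)\subseteq T$. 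Therefore $G'/Z(V)$ is a quotient of $G'/(G'\cap Z(T))$ and so is torsion, proving that $G'$ is virtually central-by-torsion.

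I expect the main obstacle to be the final step of part (2): Theorem~\ref{helpp} delivers only that the derived subgroup of $G/Z(T)$ is torsion, and one then has to exhibit an open normal subgroup of $G'$ whose centre absorbs $G'\cap Z(T)$; the choice $V=G'\cap T$ works precisely because $V\subseteq T$ and $Z(T)$ centralizes $T$. A further point needing care is that $C_{\bar G}(\phi)$ equals, rather than merely contains, the image of $C_G(\phi)$---this is exactly where coprimality is used, through Lemma~\ref{gor}.
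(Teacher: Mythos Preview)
Your proof is correct and follows essentially the same strategy as the paper: apply Theorem~\ref{main1} to each $K_\phi$, intersect the resulting open normal subgroups, pass to $G/Z(T)$, use Lemma~\ref{gor} to identify centralizers in the quotient, and invoke Theorem~\ref{helpp}. You are in fact more careful than the paper on several points---making $T$ explicitly $A$-invariant, handling the possibility that $A$ acts unfaithfully on $\bar G$, and, in part~(2), explicitly exhibiting $V=G'\cap T$ as the open normal witness for $G'$ being virtually central-by-torsion---all of which the paper leaves implicit.
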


\begin{proof} Part (1). Assume that $A$ has rank two and $Pr(\langle x\rangle , G) > 0$ for every $x \in C_G(\phi)$ and $\phi \in A^{\#}$. Let $A_1,\dots,A_{p+1}$ be the maximal subgroups of $A$ and write $K_i$ for $C_G(A_i)$. Applying Theorem \ref{main1} conclude that there are open normal subgroups $T_1,\dots,T_{p+1}$, such that $K_i/C_{K_i}(T_i)$ is torsion for $i=1,\dots,p+1$. Let $T = \bigcap_{i=1}^{p+1} T_i$ and $Z=Z(T)$. Then, by Lemma \ref{gor}, the centralizers $C_{G/Z}(A_i)$ are torsion. In view of Theorem \ref{helpp} (1) we conclude that $G/Z$ is torsion, as required. 

The proof of Part (2) is similar to the above. Assume that the rank of $A$ is at least 3 and write $K_i$ for for the commutator subgroup $C_G(A_i)'$, where $A_1,\dots,A_s$ are the maximal subgroups of $A$. Then proceed as above and use Theorem \ref{helpp} (2) in place of Theorem \ref{helpp} (1).
\end{proof}

\end{document}